\newcolumntype{L}{>{$}l<{$}}
\newcolumntype{D}{>{\centering\arraybackslash}p{3.5cm}}
\newcolumntype{C}{>{$}c<{$}}
\newcommand{\bea}{\begin{eqnarray}}
\newcommand{\ena}{\end{eqnarray}}
\newcommand{\beas}{\begin{eqnarray*}}
\newcommand{\enas}{\end{eqnarray*}}
\newcommand{\beq}{\begin{equation}}
\newcommand{\enq}{\end{equation}}
\newcommand{\ignore}[1]{}
\newtheorem{theorem}{Theorem}[section]
\newtheorem{corollary}{Corollary}[section]
\newtheorem{definition}{Definition}[section]
\numberwithin{equation}{section}
\begin{document}

\title{Generalization of Apollonius Circle}


\author{Ömer Avcı\footnote{Department of Electrical $\&$ Electronics Engineering, Bo\u{g}azi\c{c}i University, 34342, Bebek, Istanbul, Turkey} \hspace{0.2in} Ömer Talip Akalın \footnote{Department of Computer Engineering, Bo\u{g}azi\c{c}i University, 34342, Bebek, Istanbul, Turkey} \hspace{0.2in} Faruk Avcı \footnote{Department of Computer Engineering, Istanbul Technical University, 34485, Maslak, Istanbul, Turkey}  \hspace{0.2in} Halil Salih Orhan\footnote{Department of Computer Engineering, Bo\u{g}azi\c{c}i University, 34342, Bebek, Istanbul, Turkey}  }

\maketitle
\date{}
\begin{abstract}
Apollonius of Perga, showed that for two given points $A,B$ in the Euclidean plane and a positive real number $k\neq 1$, geometric locus of the points $X$ that satisfies the equation $|XA|=k|XB|$ is a circle. This circle is called Apollonius circle. In this paper we generalize the definition of the Apollonius circle for two given circles $\Gamma_1,\Gamma_2$ and we show that geometric locus of the points $X$ with the ratio of the power with respect to the circles $\Gamma_1,\Gamma_2$ is constant, is also a circle. Using this we generalize the definition of Apollonius Circle, and generalize some results about Apollonius Circle.

\end{abstract}

\section{Preliminaries}

\begin{theorem}(Apollonius Theorem)
For points $A,B$ in the Euclidean plane, and a positive real number $k\neq 1$, the points $X$ which satisfies the equation
$$|XA|=k|BX|$$
forms a circle. 
When $k=1$, they form the line perpendicular to $AB$ at the middle point of $[AB]$ \cite{prasolov}.

\end{theorem}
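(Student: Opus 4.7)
The plan is to pick coordinates adapted to the two given points and reduce the problem to a direct algebraic verification. I would place $A$ at the origin and $B$ at $(d,0)$, where $d = |AB|$, and write $X = (x,y)$. Squaring the defining relation $|XA| = k|XB|$ turns it into
\[
x^2 + y^2 \;=\; k^2\bigl((x-d)^2 + y^2\bigr),
\]
which, after expanding and collecting like terms, becomes
\[
(1-k^2)(x^2+y^2) + 2k^2 d\,x - k^2 d^2 \;=\; 0.
\]
The two cases in the statement now separate according to whether the coefficient $1-k^2$ vanishes.

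When $k=1$, the quadratic terms drop out and the equation reduces to $2dx = d^2$, i.e.\ $x = d/2$, which is precisely the line perpendicular to $AB$ through its midpoint. When $k \neq 1$, I would divide through by $1-k^2$ and complete the square in $x$. This puts the locus in the standard form $(x-x_0)^2 + y^2 = r^2$ with center on line $AB$ at
\[
x_0 \;=\; \frac{k^2 d}{k^2-1}
\qandq
r \;=\; \frac{k\,d}{|k^2-1|},
\]
so the locus is a circle, as claimed. The only thing to verify is that $r^2 > 0$, which is automatic since $k,d>0$ and $k\neq 1$; I would also note (for orientation) that the center lies outside the segment $[AB]$, on the side of $B$ when $k<1$ and on the side of $A$ when $k>1$.

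I do not expect a real obstacle here — the proof is essentially an expansion and a completion of the square — so the main risk is just sign bookkeeping across the cases $k<1$ and $k>1$, which the absolute value $|k^2-1|$ handles uniformly. As a sanity check and alternative route, one could instead argue geometrically: the internal and external bisectors at $X$ of angle $AXB$ meet line $AB$ at the two fixed points $P,Q$ that divide $AB$ internally and externally in the ratio $k:1$, and since the two bisectors are perpendicular, $X$ sees $PQ$ under a right angle and therefore lies on the circle with diameter $PQ$. This would confirm independently both that the locus is a circle and that its center is collinear with $A$ and $B$.
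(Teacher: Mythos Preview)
Your argument is correct and complete. The paper does not actually supply its own proof of this preliminary theorem (it is quoted with a citation to Prasolov), but the coordinate-and-complete-the-square method you use is precisely the approach the paper later employs to prove its generalized version (Theorem~\ref{maint}), so your proposal is fully in line with the paper's methods. One small slip in your parenthetical orientation remark: with $A=(0,0)$ and $B=(d,0)$, the center $x_0 = k^2 d/(k^2-1)$ satisfies $x_0>d$ when $k>1$ and $x_0<0$ when $k<1$, so the center lies beyond $B$ when $k>1$ and beyond $A$ when $k<1$, the reverse of what you wrote; this does not affect the proof itself.
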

\begin{figure}[ht]
\includegraphics[width=10cm]{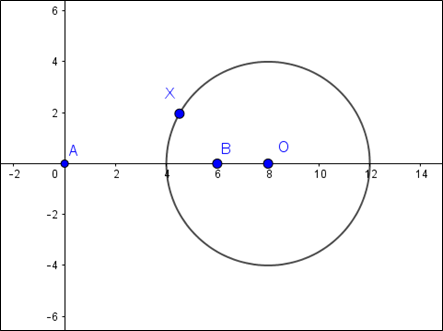}
\centering
\caption{An example of the Apollonius Thorem for fixed points $A,B$ and $k=2$}

\end{figure}

\begin{definition} \label{apolldef}
For three different points $A,B,C$ in Euclidean plane such that $A$ is not on the perpendicular
bisector of the segment $[BC]$. We will use the notation $K_A(B,C)$ for the circle which consists
of points that holds the equation 
$$\frac{|XB|}{|XC|}=\frac{|AB|}{|AC|}$$
and $M_A(B,C)$ for the center and $r_A(B,C)$ for the radius of that circle. During
this article, we will call the notation $K_A(B,C)$, \textit{Apollonius Circle} of point $A$ to the points $B,C$.

\end{definition}
\begin{figure}[!ht]
\includegraphics[width=10cm]{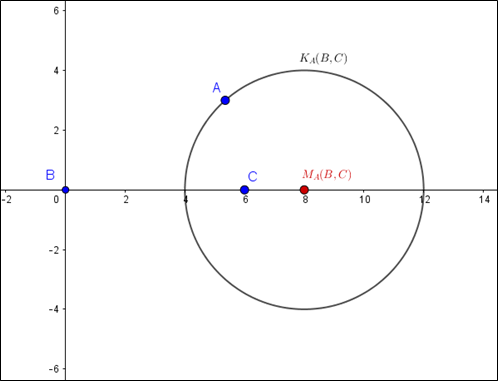}
\centering
\caption{A brief explanation of the Definiton \ref{apolldef}}

\end{figure}

\begin{definition}
For a point $A$ and a circle $\Gamma$ with center $O$ and radius $r$ in the Euclidean plane. The power of the point A with respect to circle $\Gamma$ is shown by $P_\Gamma(A)$ and it is equal to:
$$P_\Gamma(A)=|OA|^2-r^2$$
\begin{figure}[!ht]
\includegraphics[width=10cm]{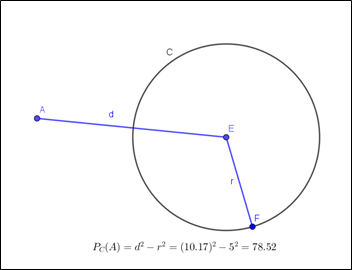}
\centering
\caption{Power of point A to the circle C}

\end{figure}

\end{definition}

\begin{definition}
For two circles $\Gamma_1,\Gamma_2$ with the centers $O_1,O_2$ respectively, in the Euclidean space the points with equal
power to the circles i.e. the points $X$ satisfying the equation
$$P_{\Gamma_1}(X)=P_{\Gamma_2}(X)$$
forms a line which is perpendicular to the line $O_1O_2$, and it is called the radical axis of the circles $\Gamma_1,\Gamma_2$ \cite{coxeter}.
\end{definition}
\begin{figure}[!ht]
\includegraphics[width=10cm]{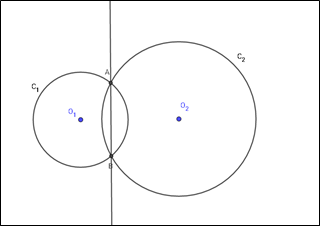}
\centering
\caption{Radical Axis of the circles $C_1,C_2$}

\end{figure}

\begin{theorem} \label{22}
For a given scalene triangle $\Delta ABC$, the line $BC$ and the bisectors of the angle $\angle BAC$ intersects at two points $X,Y$, and those points lies on the $K_A(B,C)$. 
\cite{altshiller}
\begin{figure}[!ht]
\includegraphics[width=10cm]{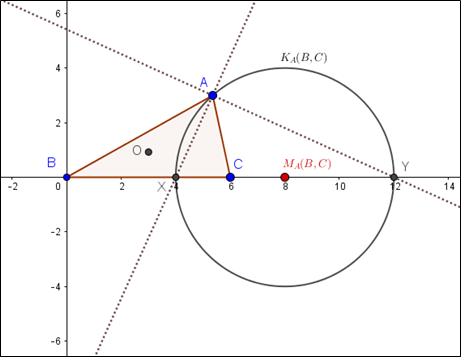}
\centering
\caption{Graphical explanation of Theorem \ref{22}}
\end{figure}
\end{theorem}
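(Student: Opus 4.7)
The plan is to recognize that the two claimed intersection points are produced by the classical internal and external angle bisector theorems, and that each of them, by construction, satisfies the defining ratio of $K_A(B,C)$ from Definition~\ref{apolldef}; the statement then follows immediately.

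First I would set up notation. Let $X$ be the intersection of the internal bisector of $\angle BAC$ with the segment $[BC]$, and let $Y$ be the intersection of the external bisector with the line $BC$. Because $\Delta ABC$ is scalene, $|AB|\neq |AC|$, so the external bisector is not parallel to $BC$; hence $Y$ exists and lies outside $[BC]$, while $X$ lies strictly inside $[BC]$, so in particular $X\neq Y$.

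Next I would invoke the internal angle bisector theorem to obtain
$$\frac{|XB|}{|XC|}=\frac{|AB|}{|AC|},$$
and the external angle bisector theorem to obtain
$$\frac{|YB|}{|YC|}=\frac{|AB|}{|AC|}.$$
Each of these is exactly the defining equation of the locus $K_A(B,C)$ given in Definition~\ref{apolldef}, so both $X$ and $Y$ belong to $K_A(B,C)$. This gives the two required points.

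The main (and essentially only) obstacle is justifying the external version of the bisector identity with unsigned lengths, since the proportion is most naturally phrased with signed ratios. I would handle this either by citing the external angle bisector theorem directly from a standard reference such as \cite{altshiller}, or by a short similar-triangles argument: draw through $C$ the parallel to the external bisector at $A$, intersect it with line $AB$, and compare the two pairs of similar triangles that appear. Once both ratios are in hand, there is nothing further to compute: the conclusion is a direct reading of Definition~\ref{apolldef}. As a side remark worth including, $X$ and $Y$ are in fact the two endpoints of a diameter of $K_A(B,C)$, since they are harmonic conjugates with respect to $B,C$ and the two bisectors at $A$ are perpendicular.
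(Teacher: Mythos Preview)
Your argument is correct and is exactly the standard route: apply the internal and external angle bisector theorems to get $|XB|/|XC|=|AB|/|AC|=|YB|/|YC|$, then read off membership in $K_A(B,C)$ from Definition~\ref{apolldef}. The paper does not actually prove this statement; it is listed in the Preliminaries with a citation to \cite{altshiller} and no accompanying proof, so there is nothing to compare against beyond noting that your write-up supplies precisely the classical justification the paper takes for granted.
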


\begin{theorem} \label{23}
Let $O$ be the circumcenter of the a scalene triangle $\Delta ABC$ then, the line $AO$ is a tangent to the
circle $K_A(B,C)$ which also means that $AO \perp A M_A(B,C)$.
\begin{figure}[!ht]
\includegraphics[width=10cm]{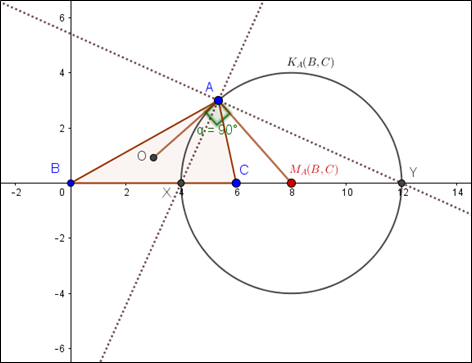}
\centering
\caption{Graphical explanation of Theorem \ref{23}}
\end{figure}
\end{theorem}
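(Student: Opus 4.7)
The plan is to describe $K_A(B,C)$ through its diameter (using Theorem~\ref{22}) and then apply the tangent-chord angle theorem at $A$. Let $X$ and $Y$ denote the feet on line $BC$ of the internal and external bisectors of $\angle BAC$, respectively. By Theorem~\ref{22}, both $X$ and $Y$ lie on $K_A(B,C)$; since the internal and external bisectors at $A$ are orthogonal, $\angle XAY = 90^\circ$. Hence $XY$ is a diameter of $K_A(B,C)$, and its center $M_A(B,C)$ is the midpoint of $XY$, lying on line $BC$.

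With this in hand, proving $AO \perp AM_A(B,C)$ is equivalent to proving that $AO$ is tangent to $K_A(B,C)$ at $A$. By the tangent-chord angle theorem, this reduces to the identity
\[
\angle OAX \;=\; \angle AYX \pmod{180^\circ},
\]
where the right-hand side is the inscribed angle in $K_A(B,C)$ subtending the chord $AX$ from $Y$. I would compute the two sides separately. For the right-hand side, the right angle $\angle XAY = 90^\circ$ gives $\angle AYX = 90^\circ - \angle AXY$; the exterior-angle formula in $\triangle ABX$, together with $\angle BAX = A/2$, yields $\angle AXY = B + A/2$, and the identity $A+B+C=180^\circ$ then simplifies this to $\angle AYX = (C-B)/2$. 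For the left-hand side, I invoke the standard circumcenter identity $\angle OAB = 90^\circ - C$ (coming from the isosceles triangle $OAB$ with apex angle $2C$) together with $\angle BAX = A/2$ to get $\angle OAX = A/2 - (90^\circ - C) = (C-B)/2$. The two sides agree, completing the proof.

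The calculation itself is routine; the real subtlety is configuration management, since the sign of $\angle OAB$ and the relative positions of $O$, $X$, $Y$ around the chord $AX$ depend on whether $\triangle ABC$ is acute or obtuse at $C$ and on the ordering of $B$ and $C$. I plan to handle this uniformly by working throughout with directed angles modulo $180^\circ$, under which both the circumcenter identity and the tangent-chord angle theorem take sign-free forms and the two computations above remain valid verbatim.

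A more conceptual alternative, which I may mention in passing, is to show directly that the circumcircle of $\triangle ABC$ and $K_A(B,C)$ are orthogonal: since $X$ and $Y$ are diametrically opposite points of $K_A(B,C)$ lying on line $BC$, a short computation gives $\overline{M_A B}\cdot\overline{M_A C} = r_A^2$, so $B$ and $C$ are inverse with respect to $K_A(B,C)$; inversion in $K_A(B,C)$ then permutes $\{A,B,C\}$ (fixing $A$ and swapping $B,C$) and therefore sends the circumcircle to itself. A circle fixed setwise by an inversion is orthogonal to the circle of inversion, so the radii $OA$ and $M_A(B,C)\,A$ at the intersection point $A$ are perpendicular, which is exactly the claim.
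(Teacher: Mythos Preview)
Your proof is correct. Both the angle-chase via the tangent--chord criterion and the inversion argument are valid; the directed-angle convention you invoke does handle the configuration ambiguities uniformly, and the computation $\angle OAX = \angle AYX = \tfrac{C-B}{2}$ is accurate.

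There is nothing to compare against, however: the paper states Theorem~\ref{23} in its Preliminaries section as a known classical fact and does not supply a proof of its own. So your write-up is not an alternative to the paper's argument but rather a proof where the paper gives none. Of your two approaches, the inversion one (showing $B$ and $C$ are inverse in $K_A(B,C)$, hence the circumcircle is orthogonal to $K_A(B,C)$) is the cleaner and more robust, since it avoids the case management entirely; you may want to promote it from an afterthought to the main argument.
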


\begin{theorem} \label{24}
Let $\Delta ABC$ be a scalane triangle. Then the points $M_A(B,C),M_B(C,A),M_C(A,B)$ are collinear. This line is called Lemoine line \cite{lemoine}. Also there exists two points $S_1,S_2$ and they lie on all the circles $K_A(B,C),K_B(C,A),K_C(A,B)$. If $O$ is the circumcenter of the triangle $\Delta ABC$, then the points $S_1,S_2,O$ are collinear.
\begin{figure}[!ht]
\includegraphics[width=10cm]{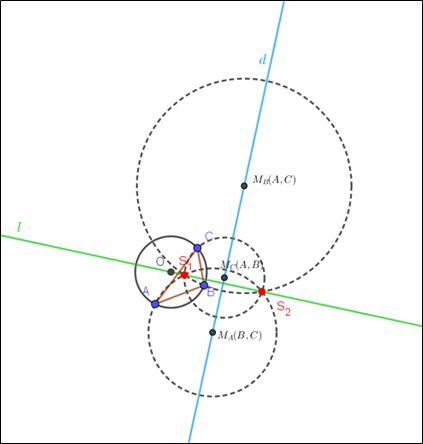}
\centering
\caption{Graphical explanation of Theorem \ref{24}}
\end{figure}
\end{theorem}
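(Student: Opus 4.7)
The plan is to recognize the three Apollonius circles $K_A(B,C)$, $K_B(C,A)$, $K_C(A,B)$ as members of a single coaxial pencil; all three conclusions of the theorem are then direct consequences of the general theory of coaxial circles, namely collinear centers, a common radical axis, and two common points lying on that axis.

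For the collinearity of $M_A(B,C), M_B(C,A), M_C(A,B)$, I would apply Menelaus' theorem to triangle $ABC$ with these three centers as the transversal points. By Theorem \ref{22}, $M_A(B,C)$ is the midpoint on line $BC$ of the internal and external bisector feet of $\angle A$ and therefore lies on line $BC$; a short computation places it so that the signed ratio satisfies $\overline{BM_A}/\overline{M_A C} = -c^2/b^2$, where $a,b,c$ are the usual side lengths. The cyclic analogues give $-a^2/c^2$ and $-b^2/a^2$, whose product is exactly $-1$, so Menelaus yields collinearity on the Lemoine line.

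Next, to identify the coaxial structure I would use the following algebraic observation: if a point $X$ satisfies two of the Apollonius conditions, say $|XB|/|XC| = c/b$ and $|XC|/|XA| = a/c$, multiplying gives $|XB|/|XA| = a/b$, so $X \in K_C(A,B)$. Thus every pairwise intersection is automatically a triple intersection, which is precisely the statement that the three pairwise radical axes coincide, i.e.\ the three circles form a coaxial family. The decisive additional input is Theorem \ref{23}: since $A \in K_A(B,C)$ trivially and $OA$ is tangent to $K_A(B,C)$ at $A$, the power of the circumcenter $O$ with respect to $K_A(B,C)$ equals $|OA|^2 = R^2$, and by symmetry the power of $O$ with respect to $K_B(C,A)$ and $K_C(A,B)$ is also $R^2$. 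Hence $O$ lies on the common radical axis.

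The final collinearity is then immediate: the two common points $S_1, S_2$ lie on each of the three circles and therefore on the common radical axis, and we have just shown that $O$ lies on it as well. The step I expect to require the most care is verifying that $S_1, S_2$ are real, i.e.\ that the coaxial pencil is of intersecting type rather than non-intersecting. I would handle this by letting $F$ denote the foot of the perpendicular from the Lemoine line to the radical axis and comparing $|M_A F|$ with $r_A(B,C)$ for a scalene triangle to confirm a nontrivial chord; equivalently, one can exhibit the two intersection points of $K_A(B,C)$ and $K_B(C,A)$ directly using the explicit center-and-radius formulas and then use the multiplicative ratio argument above to place them on $K_C(A,B)$.
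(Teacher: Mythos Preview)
The paper does not actually prove Theorem~\ref{24}: it is stated in the Preliminaries section as classical background, accompanied only by a citation and an illustrative figure, with no argument supplied. There is therefore no in-paper proof to compare your proposal against.

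For completeness, your outline is the standard classical argument and is sound in every substantive step. The signed ratio $\overline{BM_A}/\overline{M_AC}=-c^{2}/b^{2}$ is correct, so Menelaus yields the Lemoine line; the multiplicative identity on distance ratios is precisely how one checks that two Apollonius conditions force the third, giving coaxiality; and invoking Theorem~\ref{23} to compute the power of the circumcenter $O$ with respect to each Apollonius circle as $|OA|^{2}=R^{2}$ is the clean way to place $O$ on the common radical axis, whence $S_1,S_2,O$ are collinear. The one obligation you rightly flag---that the pencil is of intersecting type so that $S_1,S_2$ are real---can be discharged as follows: the power computation gives $|OM_A|^{2}=R^{2}+r_A(B,C)^{2}$, so each Apollonius circle is orthogonal to the circumcircle, hence the circumcircle lies in the conjugate coaxial pencil; but the Lemoine line is the polar of the symmedian point (an interior point) with respect to the circumcircle and so misses it, which rules out the non-intersecting case for the Apollonius pencil.
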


\section{Results }


\begin{theorem}(Generalized Apollonius Circle Theorem) \label{maint}
Let $\Gamma_1,\Gamma_2$ be two circles given in the plane. Let $k$ be a real number. Then the geometric locus of the points $X$ that satisfies the equation 
\begin{equation} \label{apollo}
    P_{\Gamma_1}(X)=k P_{\Gamma_2}(X) 
\end{equation}
is a circle (possibly degenerate - line, single point or non-existent)
\end{theorem}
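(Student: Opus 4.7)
The plan is to carry out the computation in coordinates and then classify the resulting equation by cases. Place a Cartesian frame so that $\Gamma_1$ has center $O_1=(a_1,b_1)$ and radius $r_1$, and $\Gamma_2$ has center $O_2=(a_2,b_2)$ and radius $r_2$. For an arbitrary point $X=(x,y)$, the definition of power gives
\[
P_{\Gamma_1}(X)=(x-a_1)^2+(y-b_1)^2-r_1^2,\qquad P_{\Gamma_2}(X)=(x-a_2)^2+(y-b_2)^2-r_2^2,
\]
so the defining relation \eqref{apollo} becomes a polynomial equation of degree at most two in $x$ and $y$. The key observation is that both powers have the same leading part $x^2+y^2$, so after expanding and collecting terms the coefficient of $x^2$ and of $y^2$ is $(1-k)$ in each, while the $xy$-coefficient is zero.

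Next I would split into two cases according to whether $1-k$ vanishes. If $k=1$, the quadratic terms cancel and what remains is a linear equation in $x,y$; this is precisely the radical axis of $\Gamma_1$ and $\Gamma_2$, which is the degenerate ``line'' possibility in the statement (and is already known to exist as such from the preliminaries). If $k\neq 1$, I divide through by $1-k$ to obtain an equation of the form
\[
x^2+y^2+Dx+Ey+F=0,
\]
with $D,E,F$ explicit rational functions of $a_1,b_1,r_1,a_2,b_2,r_2,k$. Completing the square in $x$ and $y$ puts it in the form $(x-x_0)^2+(y-y_0)^2=\rho$, whose locus is a genuine circle when $\rho>0$, a single point when $\rho=0$, and empty when $\rho<0$; all three are accounted for by the parenthetical ``possibly degenerate'' in the statement.

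There is no real obstacle here: the whole argument is one expansion plus a dichotomy on the sign of $1-k$ and the sign of $\rho$. The only points worth being careful about are to note explicitly that the $xy$-cross term vanishes (which is why we get a circle and not a general conic), and to record the center
\[
(x_0,y_0)=\left(-\tfrac{D}{2},-\tfrac{E}{2}\right)
\]
in terms of $O_1,O_2,r_1,r_2,k$, since the explicit formula will be useful in the subsequent generalizations of the Apollonius-circle results cited in Theorems~\ref{22}--\ref{24}. In particular, one can read off that $(x_0,y_0)$ lies on the line $O_1O_2$, recovering the classical fact that the center of the Apollonius circle of two points lies on the line through those points as the special case $r_1=r_2=0$.
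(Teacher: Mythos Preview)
Your proposal is correct and follows essentially the same approach as the paper: expand the power condition in coordinates, observe that the quadratic part is $(1-k)(x^2+y^2)$ with no cross term, split on $k=1$ versus $k\neq 1$, and complete the square in the latter case. The only cosmetic difference is that the paper first normalizes coordinates so that $O_1=(0,0)$ and $O_2=(|O_1O_2|,0)$, which makes the explicit formulas for the center and radius (needed later) slightly cleaner to write down.
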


\begin{proof}
Let $O_1,O_2$ be the centers and $r_1,r_2$ be the radii of the circles $\Gamma_1,\Gamma_2$ respectively.
Without loss of generality let us choose our coordinate system such that:
$$O_1=(0,0), \; O_2=(|O_1O_2|,0)$$
Let $X=(a,b)$ be a point that satisfies the desired relationship. Then we have:
$$(a^2+b^2-r_1^2)=k((a-|O_1O_2|)^2+b^2-r_2^2)$$
we can write it down as:
$$(k-1)a^2-2ak|O_1O_2|+(k-1)b^2=-k|O_1O_2|^2+kr_2^2-r_1^2$$
If $k=1$ this equations yields:
$$-2a=-|O_1O_2|^2+r_2^2-r_1^2$$
from this equation we can see that $a$ is constant but $b$ can be anything therefore the locus points is a line that is perpendicular to $O_1O_2$ and it is known as the radical axis.
If $k\neq 1$ then we can manipulate the equation we have into a circle equation such that:
\begin{equation} \label{apoeq}
    \Big(a-\frac{k}{k-1}|O_1O| \Big)^2+b^2=\frac{kr_2^2-r_1^2}{k-1}+\frac{k}{(k-1)^2}|O_1O_2|^2
\end{equation}
Now we can see that it is a circle equation with center $M=(\frac{k}{k-1}|O_1O|,0)$ we can see that depending on the value of the right hand side the geometric locus will be a circle($RHS>0$) or a single point ($RHS=0$) or non-existent ($RHS<0$).
We leave the calculation of when the right hand side of the equation is non-negative for later. Note that if the RHS of the equation is positive then the radius of the circle will be:
\begin{equation} \label{yaricap}
  r^2 = \frac{kr_2^2-r_1^2}{k-1} + \frac{k}{(k-1)^2}|O_1O_2|^2 
\end{equation}
Note that we can observe that the center $M$ lies on the line $O_1O_2$ and it satisfies the relationship:
\begin{equation} \label{oran}
    \frac{\overrightarrow{O_1M}}{\overrightarrow{O_2M}}=k=\frac{P_{\Gamma_1}(X)}{P_{\Gamma_2}(X)}
\end{equation}

\end{proof}

\begin{theorem}(Possible values of the power ratio)
Let $\Gamma_1,\Gamma_2$ be two intersecting circles given in the plane. Let $k$ be a real number. Then the geometric locus of the points $X$ that satisfies the equation 
$$P_{\Gamma_1}(X)=k P_{\Gamma_2}(X)$$ is
a circle for the all $k \in \mathbb{R}-\{1\}$.

\end{theorem}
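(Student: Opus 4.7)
The plan is to leverage the fact that intersecting circles share two common points, where both powers vanish simultaneously, forcing the locus to contain these two points for every value of $k$. Combined with the classification from Theorem \ref{maint}, this rules out the degenerate cases.

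First I would let $P$ and $Q$ denote the two distinct intersection points of $\Gamma_1$ and $\Gamma_2$. For any point $Y$ lying on $\Gamma_i$, we have $P_{\Gamma_i}(Y)=0$ by definition of the power, since the distance from the center equals the radius. Therefore both $P_{\Gamma_1}(P)=P_{\Gamma_2}(P)=0$ and $P_{\Gamma_1}(Q)=P_{\Gamma_2}(Q)=0$. This means the equation
\[
P_{\Gamma_1}(X)=k\,P_{\Gamma_2}(X)
\]
is satisfied trivially at $X=P$ and at $X=Q$, regardless of the value of $k$. Hence the locus contains at least these two distinct points.

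Next I would invoke Theorem \ref{maint}: for $k\neq 1$ the locus is either a genuine circle, a single point, or empty. Since we have just exhibited two distinct points on it, the locus cannot be empty and cannot be a single point, so it must be a genuine circle. This is the whole argument, and no obstacle arises: the crux is simply the observation that intersection points are universal solutions of \eqref{apollo}.

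For completeness I would also remark, using \eqref{yaricap}, that this forces the right-hand side
\[
\frac{kr_2^2-r_1^2}{k-1}+\frac{k}{(k-1)^2}|O_1O_2|^2
\]
to be strictly positive for every $k\neq 1$ whenever $\Gamma_1,\Gamma_2$ intersect, providing an algebraic confirmation of the geometric argument. The main insight is conceptual rather than computational: common zeros of the two power functions are automatically fixed points of the whole family of loci, so the intersection points act as a "skeleton" guaranteeing non-degeneracy.
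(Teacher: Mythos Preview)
Your proof is correct and follows essentially the same approach as the paper: both arguments observe that the two intersection points have zero power with respect to each circle, hence lie on the locus for every $k$, and then invoke the trichotomy of Theorem~\ref{maint} to rule out the degenerate cases. Your closing remark about the positivity of the right-hand side of \eqref{yaricap} is a nice addition but not present in the paper.
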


\begin{proof}
Let the intersection points of the $\Gamma_1,\Gamma_2$ be $A,B$. Then we have:
$$P_{\Gamma_1}(A)=P_{\Gamma_2}(A)=P_{\Gamma_1}(B)=P_{\Gamma_2}(B)=0$$
We have shown that the equation \ref{apoeq} is a circle equation. We also know that it is not a line when $k\neq 1$. We can clearly see that the points $A,B$ satisfies the the equation \ref{apollo}, therefore there is at least two points satisfying the circle equation \ref{apoeq} which means that it can not be a single point or non-existent therefore it must be a circle.
\end{proof}

\begin{theorem}(Possible values of the power ratio)
Let $\Gamma_1,\Gamma_2$ be two circles that are not intersecting at two different points. Let $k$ be a real number. Then the geometric locus of the points $X$ that satisfies the equation 
$$P_{\Gamma_1}(X)=k P_{\Gamma_2}(X)$$ is:
\begin{enumerate}[(i)]
    \item A line when $k=1$
    \item A single point if $$k=\frac{r_1^2+r_2^2-|O_1O_2|^2 \pm \sqrt{(r_1^2+r_2^2-|O_1O_2|^2)^2-4r_1^2r_2^2}}{2r_2^2}$$
    \item A circle if 
    $$\frac{r_1^2+r_2^2-|O_1O_2|^2 + \sqrt{(r_1^2+r_2^2-|O_1O_2|^2)^2-4r_1^2r_2^2}}{2r_2^2}<k$$
    or 
    $$k<\frac{r_1^2+r_2^2-|O_1O_2|^2 - \sqrt{(r_1^2+r_2^2-|O_1O_2|^2)^2-4r_1^2r_2^2}}{2r_2^2}$$
    \item Non-existent otherwise
\end{enumerate}

\end{theorem}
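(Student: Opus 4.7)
The plan is to start from the formula for $r^2$ produced in Theorem \ref{maint}, namely equation \eqref{yaricap}, and classify the sign of its right-hand side as a function of $k$. Part (i) is already handled by Theorem \ref{maint}: when $k=1$ the equation \eqref{apollo} degenerates to a linear equation describing the radical axis, so no further work is needed. For the remaining cases we may assume $k\neq 1$, and the locus is a circle, a single point, or empty precisely when the right-hand side of \eqref{yaricap} is positive, zero, or negative, respectively.

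Next, I would clear denominators. Since $(k-1)^2>0$, the sign of
\[
\frac{kr_2^2-r_1^2}{k-1}+\frac{k}{(k-1)^2}|O_1O_2|^2
\]
agrees with the sign of
\[
f(k) \;=\; (k-1)(kr_2^2-r_1^2) + k|O_1O_2|^2 \;=\; r_2^2\, k^2 + \bigl(|O_1O_2|^2 - r_1^2 - r_2^2\bigr) k + r_1^2 .
\]
This reduces the problem to analyzing a quadratic in $k$ with positive leading coefficient $r_2^2$.

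The rest is the quadratic formula. The discriminant of $f$ is
\[
\Delta \;=\; \bigl(|O_1O_2|^2 - r_1^2 - r_2^2\bigr)^2 - 4 r_1^2 r_2^2 \;=\; \bigl((r_1-r_2)^2-|O_1O_2|^2\bigr)\bigl((r_1+r_2)^2-|O_1O_2|^2\bigr),
\]
and the key geometric observation is that the assumption ``$\Gamma_1,\Gamma_2$ do not intersect at two distinct points'' is exactly the statement that $|O_1O_2|\notin(|r_1-r_2|,r_1+r_2)$, which makes both factors of $\Delta$ of the same sign and hence $\Delta\geq 0$. This is the one step requiring a little care; I would verify it case-by-case (disjoint circles, internally tangent, externally tangent, one containing the other) to make sure every non-two-point configuration gives $\Delta\geq 0$.

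With $\Delta\geq 0$ the roots of $f$ are real, and solving $f(k)=0$ gives the two values in (ii). Since $r_2^2>0$, $f$ is nonnegative outside the interval between the roots and negative strictly between them, yielding (iii) for $k$ outside the closed interval and (iv) for $k$ strictly between the roots (excluding $k=1$, which lies between the roots whenever the discriminant is strict; one may check $f(1)=|O_1O_2|^2-(r_1-r_2)^2+\text{sign adjustment}$ to confirm consistency with case (i)). Substituting the explicit roots produced by the quadratic formula recovers the stated expressions, completing the classification.
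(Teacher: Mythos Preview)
Your proposal is correct and follows essentially the same route as the paper: clear denominators to reduce to the sign of $f(k)=r_2^2k^2+(|O_1O_2|^2-r_1^2-r_2^2)k+r_1^2$, apply the quadratic formula, and use the non-intersection hypothesis to verify the discriminant is nonnegative. Your explicit factorization of $\Delta$ is slightly cleaner than the paper's argument, and the parenthetical about $f(1)$ is unnecessary (in fact $f(1)=|O_1O_2|^2\geq0$, so $k=1$ never lies strictly between the roots).
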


\begin{proof}
In the equation \ref{apoeq} we stated that the shape of the geometric locus is dependent of whether the right hand side is positive, zero or negative. We stated that if it is positive the locus is a circle, if it is zero locus consists of only a single point and if it is negative there is no point on the locus. So let us find when it is positive or negative. If we make equal the denominators they become $(k-1)^2$ which is positive so we need to control nominator and it is equal to:
$$(k-1)(kr_2^2-r_1^2)+k|O_1O_2|^2=k^2r_2^2+k(|O_1O_2|^2-r_1^2-r_2^2)+r_1^2$$
We can see that it is a quadratic equation($k$ as a variable and $r_1,r_2$ as constants) with positive leading coefficient. Then the equation is negative between the roots and positive otherwise. So let us find the roots of the equation. We find the roots as:
$$k_{1,2}=\frac{r_1^2+r_2^2-|O_1O_2|^2 \pm \sqrt{(r_1^2+r_2^2-|O_1O_2|^2)^2-4r_1^2r_2^2}}{2r_2^2}$$
We can show that real roots exists because the discriminant is positive. This is because we assume that the circles $\Gamma_1,\Gamma_2$ does not intersect at two distinct points therefore we either have $|O_1O_2|\geq r_1+r_2$ or $|O_1O_2|\leq |r_1-r_2|$. Then we have:
$$(r_1+r_2)^2 \geq |O_1O_2|^2 \Rightarrow r_1^2+r_2^2+2r_1r_2-|O_1O_2|^2 \geq 0$$
and
$$(r_1-r_2)^2 \geq |O_1O_2|^2 \Rightarrow r_1^2+r_2^2-2r_1r_2-|O_1O_2|^2 \geq 0$$
therefore
$$(r_1^2+r_2^2-|O_1O_2|^2)^2-4r_1^2r_2^2 \geq 0$$
so we are done.
\end{proof}

\begin{definition}
Here we will generalize the Definition \ref{apolldef}. Let $A$ be a point and $\Gamma_1,\Gamma_2$ be two circles in the plane. Then we know that the geometric locus of the points $X$ satisfying the equation:
$$\frac{P_{\Gamma_1}(X)}{P_{\Gamma_2}(X)}=\frac{P_{\Gamma_1}(A)}{P_{\Gamma_2}(A)}$$
is a circle(possibly degenerate). We will show this circle with $K_{A}(\Gamma_1,\Gamma_2)$ and we will call it Apollonius circle of point $A$ to the circles $\Gamma_1,\Gamma_2$. Also let us denote the center of this circle as $M_{A}(\Gamma_1,\Gamma_2)$ and the radius of the circle as $r_{A}(\Gamma_1,\Gamma_2)$.
\begin{figure}[!ht]
\includegraphics[width=10cm]{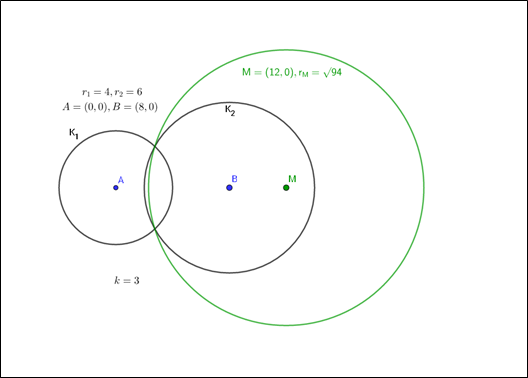}
\centering
\caption{An example of generalized Apollonius circle}
\end{figure}

\end{definition}

\begin{corollary}
If there is a point $S$ that lies on both $\Gamma_1$ and $\Gamma_2$ then $S$ also lies on the $K_A(\Gamma_1,\Gamma_2)$.
\end{corollary}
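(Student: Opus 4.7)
The plan is to substitute $X = S$ into the defining equation of $K_A(\Gamma_1,\Gamma_2)$ and check that the identity is automatically satisfied. By the definition of the power of a point with respect to a circle, $P_{\Gamma_i}(S) = |O_i S|^2 - r_i^2$, and the hypothesis $S \in \Gamma_1 \cap \Gamma_2$ forces $|O_1 S| = r_1$ and $|O_2 S| = r_2$ simultaneously, so $P_{\Gamma_1}(S) = 0 = P_{\Gamma_2}(S)$.

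Next I would pass to the cross-multiplied (polynomial) form of the Apollonius condition,
\[
P_{\Gamma_2}(A)\,P_{\Gamma_1}(X) \;=\; P_{\Gamma_1}(A)\,P_{\Gamma_2}(X),
\]
which is the actual equation manipulated in the proof of Theorem \ref{maint} and which describes $K_A(\Gamma_1,\Gamma_2)$ as a locus on the whole plane. Setting $X = S$ makes both sides zero, so $S$ lies on $K_A(\Gamma_1,\Gamma_2)$.

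The only subtle point is that the definition states the locus via the ratio $P_{\Gamma_1}(X)/P_{\Gamma_2}(X)$, which is formally undefined at $S$ since $P_{\Gamma_2}(S) = 0$. I would resolve this by adopting the cross-multiplied version above as the defining equation, noting that it agrees with the ratio form wherever the latter is defined and that it is exactly the equation whose solution set Theorem \ref{maint} identifies as the desired circle. With that convention in place, there is no real obstacle: the corollary is simply the observation that any homogeneous linear relation between $P_{\Gamma_1}$ and $P_{\Gamma_2}$ is automatically satisfied at points where both powers vanish.
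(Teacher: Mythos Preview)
The paper states this corollary without proof, so there is nothing to compare line by line. Your argument is correct and is exactly the reasoning the paper uses implicitly elsewhere: in the proof of Theorem~2.2 the authors write that the intersection points $A,B$ of $\Gamma_1,\Gamma_2$ have $P_{\Gamma_1}(A)=P_{\Gamma_2}(A)=0$ and therefore ``clearly satisfy'' equation~\eqref{apollo}, i.e., the cross-multiplied form. Your explicit handling of the $0/0$ issue via the polynomial equation $P_{\Gamma_2}(A)\,P_{\Gamma_1}(X)=P_{\Gamma_1}(A)\,P_{\Gamma_2}(X)$ is the right way to make this rigorous and matches the form actually manipulated in Theorem~\ref{maint}.
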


\begin{corollary}
If the circles $\Gamma_1,\Gamma_2$ intersects at the points $S_1,S_2$, then $K_A(\Gamma_1,\Gamma_2)$ is equivalent to the circumcircle of the triangle $\Delta S_1AS_2$.
\end{corollary}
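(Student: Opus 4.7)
The plan is to use the previous corollary together with the trivial observation that $A$ itself satisfies the defining equation of $K_A(\Gamma_1,\Gamma_2)$, so that we can exhibit three points of the Apollonius circle and identify it with the circumcircle of $\Delta S_1AS_2$ by uniqueness.

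First I would note that $A \in K_A(\Gamma_1,\Gamma_2)$ holds by definition, since $A$ trivially satisfies
\[
\frac{P_{\Gamma_1}(A)}{P_{\Gamma_2}(A)} = \frac{P_{\Gamma_1}(A)}{P_{\Gamma_2}(A)}.
\]
Next I would invoke the previous corollary applied to $S_1$ and $S_2$: since both lie on $\Gamma_1$ and $\Gamma_2$, we have $P_{\Gamma_1}(S_i)=P_{\Gamma_2}(S_i)=0$ for $i=1,2$, and hence $S_1,S_2 \in K_A(\Gamma_1,\Gamma_2)$.

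At this point the three points $A,S_1,S_2$ lie on $K_A(\Gamma_1,\Gamma_2)$. Since $\Delta S_1AS_2$ is assumed to be a (nondegenerate) triangle, these points are not collinear, and Theorem \ref{maint} guarantees that $K_A(\Gamma_1,\Gamma_2)$ is a genuine circle (it contains two distinct points $S_1,S_2$, so it is neither empty nor a single point; and since it contains three non-collinear points, it cannot reduce to the radical axis either). A circle is uniquely determined by any three non-collinear points on it, so $K_A(\Gamma_1,\Gamma_2)$ must coincide with the unique circle through $A,S_1,S_2$, namely the circumcircle of $\Delta S_1AS_2$.

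There is essentially no obstacle here; the only mild subtlety is ruling out the degenerate cases of Theorem \ref{maint}, which is handled automatically by the presence of three non-collinear points on the locus. In particular one should briefly verify that $P_{\Gamma_2}(A)\neq 0$ (otherwise the ratio is undefined), but this is implicit in the setup since the Apollonius circle $K_A(\Gamma_1,\Gamma_2)$ is assumed to be defined.
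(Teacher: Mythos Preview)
Your argument is correct and is exactly the intended one: the paper states this result as an immediate corollary without proof, and your proof supplies precisely the obvious reasoning implicit in that placement --- namely, that $A$ lies on $K_A(\Gamma_1,\Gamma_2)$ by definition, $S_1$ and $S_2$ lie on it by the preceding corollary, and three non-collinear points determine a unique circle. There is nothing to add.
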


\begin{theorem} \label{cool}
Let $\Gamma_1,\Gamma_2,\Gamma_3$ be three circles in the plane with the centers $O_1,O_2,O_3$ and radii $r_1,r_2,r_3$ respectively. Then the points $M_{O_1}(\Gamma_2,\Gamma_3)$, $M_{O_2}(\Gamma_3,\Gamma_1)$ and $M_{O_3}(\Gamma_1,\Gamma_2)$ are collinear if and only if the following equation holds:
\begin{equation} \label{ozeq}
    \sum_{i=1}^3|O_iO_{i+1}|^2|O_i O_{i+2}|^2(r_{i+1}^2-r_{i+2}^2)=\sum_{i=1}^3 r_{i+1}^2r_{i+2}^2(|O_iO_{i+1}|^2-|O_i O_{i+2}|^2)
\end{equation}
where $O_{i+3}=O_i$ and $r_{i+3}=r_i$ for $i=1,2,3$.
\end{theorem}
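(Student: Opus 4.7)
The strategy is to apply Menelaus's theorem to the triangle $\Delta O_1 O_2 O_3$. Equation (2.4) in the proof of Theorem 2.1 shows that the center of any generalized Apollonius circle $K_A(\Gamma_1,\Gamma_2)$ lies on the line joining $O_1$ and $O_2$, with signed ratio
$$\frac{\overrightarrow{O_1 M}}{\overrightarrow{O_2 M}}=\frac{P_{\Gamma_1}(A)}{P_{\Gamma_2}(A)}.$$
Applying this to each of the three centers in the statement, $M_i$ lies on the side of $\Delta O_1 O_2 O_3$ opposite to $O_i$: concretely $M_1:=M_{O_1}(\Gamma_2,\Gamma_3)$ on $O_2 O_3$, $M_2:=M_{O_2}(\Gamma_3,\Gamma_1)$ on $O_3 O_1$, and $M_3:=M_{O_3}(\Gamma_1,\Gamma_2)$ on $O_1 O_2$. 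This is precisely the configuration to which Menelaus's theorem applies.

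Computing the relevant powers, the three signed ratios are
$$\frac{\overrightarrow{O_{i+1} M_i}}{\overrightarrow{O_{i+2} M_i}}=\frac{|O_i O_{i+1}|^2-r_{i+1}^2}{|O_i O_{i+2}|^2-r_{i+2}^2}=:k_i,$$
with indices taken modulo $3$. Since $\overrightarrow{M_i O_{i+2}}=-\overrightarrow{O_{i+2} M_i}$, the Menelaus collinearity criterion
$$\frac{\overrightarrow{O_2 M_1}}{\overrightarrow{M_1 O_3}}\cdot\frac{\overrightarrow{O_3 M_2}}{\overrightarrow{M_2 O_1}}\cdot\frac{\overrightarrow{O_1 M_3}}{\overrightarrow{M_3 O_2}}=-1$$
becomes $(-k_1)(-k_2)(-k_3)=-1$, which collapses to $k_1 k_2 k_3=1$. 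Clearing denominators, the collinearity of $M_1,M_2,M_3$ is therefore equivalent to the polynomial identity
$$\prod_{i=1}^{3}\bigl(|O_i O_{i+1}|^2 - r_{i+1}^2\bigr)\;=\;\prod_{i=1}^{3}\bigl(|O_i O_{i+2}|^2 - r_{i+2}^2\bigr).$$

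The final, and main technical, step is to verify that this identity is equivalent to equation (2.6). Expanding both sides, the top term $|O_1O_2|^2|O_2O_3|^2|O_1O_3|^2$ and the bottom term $-r_1^2 r_2^2 r_3^2$ appear on both sides with the same coefficient and cancel. Of the twelve remaining monomials, six are of the type (two squared distances)$\times$(one squared radius) and six of the type (one squared distance)$\times$(two squared radii). Bringing everything to one side and regrouping by cyclic index, the first six terms assemble into the left-hand cyclic sum of (2.6), while the second six produce the negative of the right-hand cyclic sum. This rearrangement is purely mechanical — it uses only $|O_i O_j|=|O_j O_i|$ and cyclic relabelling — but the bookkeeping is the crux of the argument, as one must track the $i\mapsto i+1\mapsto i+2$ index shifts carefully to recognize that the surviving monomials collect into the two symmetric sums displayed in the statement.
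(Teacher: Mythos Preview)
Your proof is correct and follows essentially the same route as the paper: both identify the three centers as points on the sides of $\Delta O_1O_2O_3$ with signed ratios given by equation (2.4), apply Menelaus's theorem to obtain the product condition $\prod_i(|O_iO_{i+1}|^2-r_{i+1}^2)=\prod_i(|O_iO_{i+2}|^2-r_{i+2}^2)$, and then expand and rearrange to reach (2.6). Your write-up is in fact a bit more explicit than the paper's about the final algebraic bookkeeping.
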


\begin{proof}
In this theorem we present a generalization of the Lemoine line for the generalized definition of the Apollonius circle.
For convenience let us show $M_{O_i}(\Gamma_{i+1},\Gamma_{i+2})$ with $M_i$ for $i=1,2,3$ (note that $\Gamma_{i+3}=\Gamma_i$). In the proof of  \ref{maint} we have shown that $M_i$ lies on the line $O_{i+1}O_{i+2}$ and from the Equation \ref{oran} we have:
\begin{equation} 
    \frac{\overrightarrow{O_{i+1}M_i}}{\overrightarrow{O_{i+2}M_i}}=\frac{P_{\Gamma_{i+1}}(O_i)}{P_{\Gamma_{i+2}}(O_i)}
\end{equation}
Now from Menelaus Theorem we know that, $M_1,M_2,M_3$ are collinear if and only if the following equation holds:
$$\frac{\overrightarrow{O_{2}M_1}}{\overrightarrow{O_{3}M_1}} \frac{\overrightarrow{O_{3}M_2}}{\overrightarrow{O_{1}M_2}} \frac{\overrightarrow{O_{1}M_3}}{\overrightarrow{O_{2}M_3}}=1$$
From the Equation \ref{oran} we can transform this equation into this:
\begin{equation} \label{oran2}
 \frac{P_{\Gamma_{2}}(O_1)}{P_{\Gamma_{3}}(O_1)}\frac{P_{\Gamma_{3}}(O_2)}{P_{\Gamma_{1}}(O_2)}\frac{P_{\Gamma_{1}}(O_3)}{P_{\Gamma_{2}}(O_3)}  =1 
\end{equation}
and we also know that $P_{\Gamma_j}(O_i)=|O_iO_j|^2-r_j^2$ therefore the equation transform into:
\begin{equation}
\frac{\Big(|O_1O_2|^2-r_2^2\Big)}{\Big(|O_1O_3|^2-r_3^2\Big)} \frac{\Big(|O_2O_3|^2-r_3^2\Big)}{\Big(|O_2O_1|^2-r_1^2\Big)}
\frac{\Big(|O_3O_1|^2-r_1^2\Big)}{\Big(|O_3O_2|^2-r_2^2\Big)}=1    
\end{equation}
and if we rearrange this, we get the desired equation.

\end{proof}

\begin{corollary}
Let $\Gamma_1,\Gamma_2,\Gamma_3$ be three circles in the plane with the centers $O_1,O_2,O_3$ and radii $r_1,r_2,r_3$ respectively. If the centers $O_1,O_2,O_3$ forms an equilateral triangle, or all three of the radii of the circles $\Gamma_1,\Gamma_2,\Gamma_3$ are equal then the points $M_{O_1}(\Gamma_2,\Gamma_3)$, $M_{O_2}(\Gamma_3,\Gamma_1)$ and $M_{O_3}(\Gamma_1,\Gamma_2)$ are collinear.
\end{corollary}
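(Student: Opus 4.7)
The plan is to invoke Theorem \ref{cool} directly: since that theorem gives a necessary and sufficient condition for the collinearity of $M_{O_1}(\Gamma_2,\Gamma_3)$, $M_{O_2}(\Gamma_3,\Gamma_1)$, $M_{O_3}(\Gamma_1,\Gamma_2)$ in the form of Equation \ref{ozeq}, it suffices to verify that this identity holds whenever one of the two stated hypotheses is met. So the whole task reduces to a symbolic check in the master identity.

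To keep the bookkeeping light, I would first set $d_{ij} := |O_iO_j|^2$ (so $d_{ij}=d_{ji}$) and $s_i := r_i^2$, and then write out Equation \ref{ozeq} as
\[
d_{12}d_{13}(s_2-s_3)+d_{23}d_{21}(s_3-s_1)+d_{31}d_{32}(s_1-s_2)
=s_2s_3(d_{12}-d_{13})+s_3s_1(d_{23}-d_{21})+s_1s_2(d_{31}-d_{32}).
\]
Both sides are cyclic sums in which every term carries either a factor of the form $s_i-s_j$ or a factor of the form $d_{ij}-d_{ik}$, and that is exactly what makes the two hypotheses of the corollary work.

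In the first case, suppose $O_1O_2O_3$ is equilateral, so $d_{12}=d_{13}=d_{23}$ (call the common value $d$). The left-hand side becomes $d^2\bigl((s_2-s_3)+(s_3-s_1)+(s_1-s_2)\bigr)=0$, and every difference $d_{ij}-d_{ik}$ on the right-hand side vanishes, so that side is $0$ as well. In the second case, suppose $r_1=r_2=r_3$, so $s_1=s_2=s_3=:s$. Now every difference $s_{i+1}-s_{i+2}$ on the left vanishes, while the right-hand side becomes $s^2\bigl((d_{12}-d_{13})+(d_{23}-d_{21})+(d_{31}-d_{32})\bigr)=0$. In either case Equation \ref{ozeq} is satisfied, and Theorem \ref{cool} delivers the collinearity.

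There is really no hard step here; the only thing to be careful about is the cyclic indexing convention $O_{i+3}=O_i$, $r_{i+3}=r_i$ used in Theorem \ref{cool}, so that one correctly identifies which $d_{ij}$ and which $s_i$ appear in each summand. Once that is in place, the two cases are short direct substitutions, which is why it is natural to treat them together as a single corollary to the main theorem.
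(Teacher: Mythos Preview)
Your proof is correct and follows essentially the same approach as the paper: invoke Theorem \ref{cool} and verify that under either hypothesis both sides of Equation \ref{ozeq} vanish. The paper's version is much terser (it simply asserts that both sides are zero in each case), whereas you spell out the telescoping cyclic sums explicitly, but the underlying argument is identical.
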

\begin{proof}
It is clear that if we have 
$$|O_1O_2|=|O_2O_3|=|O_3O_1|$$
or
$$r_1=r_2=r_3$$
both sides of the equation \ref{ozeq} is equal to zero and therefore the equation holds.
\end{proof}

\begin{figure}[!ht]
\includegraphics[width=10cm]{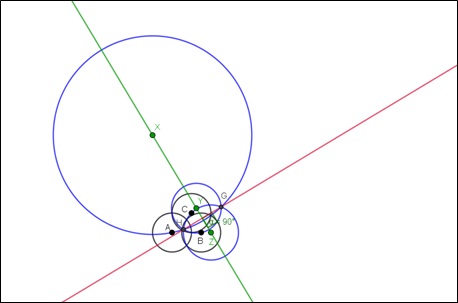}
\centering
\caption{Theorem \ref{cool} for three equal-radius circles centered at $A,B,C$}
\end{figure}

\begin{figure}[!ht]
\includegraphics[width=10cm]{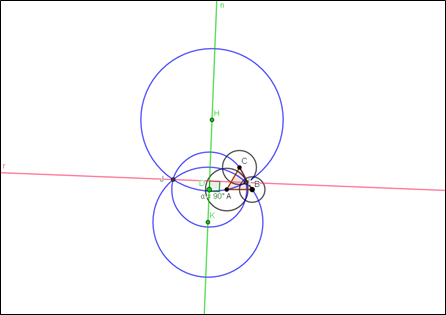}
\centering
\caption{Theorem \ref{cool} when centers of the circles form an equilateral triangle}
\end{figure}
\begin{theorem}
Let $\Gamma_1,\Gamma_2,\Gamma_3$ be three circles in the plane with the centers $O_1,O_2,O_3$ and radii $r_1,r_2,r_3$ respectively. Let $O$ be the circumcenter of the triangle $\Delta O_1O_2O_3$.
Assume that the points $M_{O_1}(\Gamma_2,\Gamma_3)$, $M_{O_2}(\Gamma_3,\Gamma_1)$ and $M_{O_3}(\Gamma_1,\Gamma_2)$ are collinear. Let the line $l_i$ be the radical axis of the circles $K_{O_{i+1}}(\Gamma_{i+2},\Gamma_{i+3})$ and $K_{O_{i+2}}(\Gamma_{i+3},\Gamma_{i+1})$ for $i=1,2,3$. Then the lines $l_1,l_2,l_3$ are the same line and this line passes through the point $O$.
\end{theorem}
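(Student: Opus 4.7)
The plan is a three-step argument. I would first establish that the three radical axes $l_1, l_2, l_3$ are mutually parallel, then show that the circumcenter $O$ of $\Delta O_1 O_2 O_3$ lies on each one, and combine.

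\textbf{Step 1 (parallelism).} By the proof of Theorem \ref{maint}, each center $M_i := M_{O_i}(\Gamma_{i+1},\Gamma_{i+2})$ of the generalized Apollonius circle $K_i := K_{O_i}(\Gamma_{i+1},\Gamma_{i+2})$ lies on the line $O_{i+1}O_{i+2}$. By hypothesis, $M_1, M_2, M_3$ lie on a single line $L$. Each $l_i$ is the radical axis of two circles $K_{i+1}, K_{i+2}$ whose centers both lie on $L$, hence $l_i \perp L$. So $l_1, l_2, l_3$ are mutually parallel, and it suffices to exhibit one common point — which we claim is $O$.

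\textbf{Step 2 (power at the circumcenter).} Set $k_i := P_{\Gamma_{i+1}}(O_i)/P_{\Gamma_{i+2}}(O_i)$. The proof of Theorem \ref{maint} shows that dividing the defining equation $P_{\Gamma_{i+1}}(X) - k_i P_{\Gamma_{i+2}}(X) = 0$ of $K_i$ by $1 - k_i$ yields its monic power form, so
$$\mathrm{pow}_{K_i}(X) = \frac{P_{\Gamma_{i+1}}(X) - k_i P_{\Gamma_{i+2}}(X)}{1 - k_i}.$$
Because $|OO_j| = R$ (the circumradius) for every $j$, we get the convenient formula $P_{\Gamma_j}(O) = R^2 - r_j^2$.

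\textbf{Step 3 (equality of powers, the main obstacle).} The difference $\mathrm{pow}_{K_1}(X) - \mathrm{pow}_{K_2}(X)$ is affine in $X$ (the $|X|^2$ terms cancel), so when we substitute $P_{\Gamma_j}(O) = R^2 - r_j^2$ the $R^2$ contributions vanish automatically, and the equation $\mathrm{pow}_{K_1}(O) = \mathrm{pow}_{K_2}(O)$ reduces to
$$k_1 k_2(r_1^2 - r_3^2) + k_2(r_2^2 - r_1^2) + (r_3^2 - r_2^2) = 0.$$
This is where the work lies. Substituting $k_i = (|O_i O_{i+1}|^2 - r_{i+1}^2)/(|O_i O_{i+2}|^2 - r_{i+2}^2)$ and clearing the common denominator $(|O_1O_3|^2 - r_3^2)(|O_1O_2|^2 - r_1^2)$ reduces the identity to a polynomial relation in the six quantities $|O_iO_j|^2, r_k^2$. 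A direct (but tedious) expansion shows this relation is, up to an overall sign, exactly equation \ref{ozeq} of Theorem \ref{cool} — which holds by the collinearity hypothesis. Running the same argument cyclically yields $\mathrm{pow}_{K_2}(O) = \mathrm{pow}_{K_3}(O)$, so $O$ has equal power with respect to $K_1, K_2, K_3$ and therefore lies on each of $l_1, l_2, l_3$. Combined with Step 1, the three mutually parallel radical axes must coincide with a single line through $O$, as claimed.
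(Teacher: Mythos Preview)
Your proof is correct and follows the same overall architecture as the paper: first use the collinearity of $M_1,M_2,M_3$ to conclude that the three radical axes are parallel, then show that the circumcenter $O$ has equal power with respect to all three $K_i$, forcing the parallel lines to coincide.

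The one substantive difference is in how you compute $\mathrm{pow}_{K_i}(O)$. The paper does this geometrically: it drops a perpendicular from $O$ to the line $O_{i+1}O_{i+2}$, uses Pythagoras and the midpoint to obtain $|OM_i|^2$, and then subtracts the explicit radius formula \eqref{yaricap}. You instead observe that the defining equation of $K_i$, once divided by $1-k_i$, is already the monic power form, giving the clean identity
\[
\mathrm{pow}_{K_i}(X)=\frac{P_{\Gamma_{i+1}}(X)-k_iP_{\Gamma_{i+2}}(X)}{1-k_i},
\]
so that plugging in $P_{\Gamma_j}(O)=R^2-r_j^2$ makes the $R^2$ drop out immediately. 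Both routes terminate in exactly the same polynomial identity, which (as you note) is equation \eqref{ozeq} after clearing the denominator $(|O_1O_3|^2-r_3^2)(|O_1O_2|^2-r_1^2)$; your algebraic shortcut simply bypasses the Pythagorean bookkeeping.
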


\begin{proof}
For convenience let us show the center $M_{O_i}(\Gamma_{i+1},\Gamma_{i+2})$ with $M_i$ and the circles $K_{O_i}(\Gamma_{i+1},\Gamma_{i+2})$ with $K_i$ and $r_{O_i}(\Gamma_{i+1},\Gamma_{i+2})$ with $R_i$ for $i=1,2,3$.
We know that radical axis of two circles is perpendicular to the line passing through the centers of the circles. And since $M_1,M_2,M_3$ are collinear then all of the lines $l_1,l_2,l_3$ are perpendicular to the line passing through $M_1,M_2,M_3$. Therefore $l_1 \parallel l_2 \parallel l_3 $ which means that they coincide or never intersect. If we show that all of this three lines pass through $O$ then we also show that they are the same line. It is sufficient to show that $O\in l_1$. From the definition of radical axis we know that:
$$O\in l_1 \Leftrightarrow P_{K_2}(O)=P_{K_3}(O)$$
Let $N_1,N_2,N_3$ be the middle points of the $[O_2O_3],[O_3O_1],[O_1O_2]$ respectively. Also let $r$ be the radius of the circle $(O_1O_2O_3)$. We know that $M_2\in O_1O_3$ and $ON_2 \perp O_1O_3$, then from Pythagoras Theorem we have:
$$|OM_2|^2=|M_2N_2|^2+|ON_2|^2$$
and also we have:
$$r^2=|OO_1|^2=|O_1N_2|^2+|ON_2|^2$$
therefore:
$$|ON_2|^2=r^2-\frac{1}{4}|O_1O_3|^2$$
so we have:
$$|OM_2|^2=|M_2N_2|^2+r^2-\frac{1}{4}|O_1O_3|^2$$
then we will calculate $|M_2N_2|$, let us say:
$$ \frac{\overrightarrow{O_{1}M_2}}{\overrightarrow{O_{3}M_2}}=k $$
From Equation \ref{oran} we know that:
$$k=\frac{|O_2O_1|^2-r_1^2}{|O_2O_3|^2-r_3^2}$$
Since $N_2$ is the middle point of $O_1O_3$, we have:
$$ \frac{\overrightarrow{O_{1}N_2}}{\overrightarrow{O_{3}N_2}}=-1$$
By combining them we get:
$$\frac{\overrightarrow{M_{2}N_2}}{\overrightarrow{O_{3}O_1}}=\frac{k+1}{2(k-1)} $$
therefore we have:
$$|M_2N_2|^2=\Big(\frac{k+1}{2(k-1)} \Big)^2$$
and then we get:
$$|OM_2|^2=|M_2N_2|^2-|O_1N_2|^2+|OO_1|^2 =|O_1O_3|^2 \frac{k}{(k-1)^2}+r^2$$
From the definition of power we have:
$$P_{K_2}(O)=|OM_2|^2-R_2^2$$
From Equation \ref{yaricap} we know that:
$$R_2^2= \frac{k}{(k-1)^2}|O_1O_3|^2+\frac{kr_3^2-r_1^2}{k-1}$$
So by combining them we get:
$$P_{K_2}(O)=r^2+\frac{kr_3^2-r_1^2}{k-1}$$
and if we put $k=\frac{|O_2O_1|^2-r_1^2}{|O_2O_3|^2-r_3^2}$, we finally get:
$$P_{K_2}(O)=r^2+\frac{(|O_1O_2|^2-r_1^2)r_3^2-(|O_2O_3|^2-r_3^2)r_1^2}{|O_1O_2|^2+r_3^2-|O_2O_3|^2-r_1^2}$$
and then we can see that $P_{K_2}(O)=P_{K_3}(O)$ if and only if:
$$\frac{(|O_1O_2|^2-r_1^2)r_3^2-(|O_2O_3|^2-r_3^2)r_1^2}{|O_1O_2|^2+r_3^2-|O_2O_3|^2-r_1^2}=\frac{(|O_1O_3|^2-r_1^2)r_2^2-(|O_2O_3|^2-r_2^2)r_1^2}{|O_1O_3|^2+r_2^2-|O_2O_3|^2-r_1^2}$$
and if we rearrange the equation we can see that it is equivalent to the equation \ref{ozeq}.

\end{proof}


\bibliographystyle{amsplain}

\end{document}